% !Mode:: "TeX:Soft:UTF-8" 
% The following command has been added to prevent hyperlinks 
% from spanning 2 pages 
%\DocumentMetadata{testphase=new-or-1,lang=en} 
% 
\documentclass[a4paper,reqno]{amsart} 
\usepackage[utf8]{inputenc} 
\usepackage{upref} 
\usepackage{mathrsfs} 
\usepackage{amsmath,amssymb,amsthm} 
\usepackage{mathtools} 
\usepackage{thmtools} 
\usepackage{enumerate} 
\usepackage{upref} 
\usepackage{indentfirst} 

\usepackage{hyperref}% 
\hypersetup{
%naturalnames=true,
hypertexnames=false, 
linkbordercolor=[rgb]{0.5, 1, 1},% 
urlbordercolor=[rgb]{0.5, 1, 1},% 
citebordercolor=[rgb]{0.5, 1, 0.5}}% 
\usepackage{graphicx} 
\usepackage{subcaption} 
\let\limsup\varlimsup 
\let\limsup\varlimsup 
% For numerical style citation 
\usepackage{cite} 

\declaretheoremstyle[
headfont=\normalfont\bfseries,
bodyfont=\normalfont\itshape,
notefont=\mdseries,
notebraces={(}{)},
headindent=\parindent,
postheadspace=0.5em,
headpunct=.,
%qed=\qedsymbol 
]{plain} 

\declaretheoremstyle[
headfont=\normalfont\itshape,
bodyfont=\normalfont\rmfamily,
notefont=\mdseries,
notebraces={(}{)},
headindent=\parindent,
postheadspace=0.5em,
headpunct=.,
%qed=\qedsymbol 
]{remark} 

\declaretheoremstyle[
headfont=\normalfont\bfseries,
bodyfont=\normalfont\rmfamily,
notefont=\mdseries,
notebraces={(}{)},
headindent=\parindent,
postheadspace=0.5em,
headpunct=.,
%\sqrt{}qed=\qedsymbol 
]{definition} 

\numberwithin{equation}{section}     % Makes labeled equations easier to find. 

\declaretheorem[numberwithin=section, name=Theorem]{theorem} 
\declaretheorem[sibling=theorem, name=Lemma]{lemma} 
\declaretheorem[sibling=theorem, name=Corollary]{corollary} 
\declaretheorem[sibling=theorem, style=remark, name=Remark]{remark} 
\declaretheorem[sibling=theorem, style=remark, name=Example]{example} 
\declaretheorem[sibling=theorem, style=definition, name=Definition]{definition} 

%========================================================== 

\newcommand\setA{\mathscr{A}} 
\newcommand\setAt{\tilde{\mathscr{A}}} 
\newcommand\tA{\tilde{A}} 
\newcommand\tB{\tilde{B}} 
\DeclareMathOperator{\tr}{tr} 
\DeclareMathOperator{\co}{co} 

%%%%%%%%%%%%%%%%%%%%%%%%%%%%%%%%%%%%%%%%%%%%%%%%%%%%%%%%%%%%%%%%%%%%%%%%%%%%%% 

\begin{document} 

\title[On spectrum maximizing products]{On pairs of spectrum maximizing products with distinct factor multiplicities}

\author{Victor Kozyakin} 

\thanks{This work of the author was supported by the Ministry of science and higher education of the Russian Federation project No. FSMG-2024-0048}

\address{Higher School of Modern Mathematics MIPT\\ 
1 Klimentovskiy per., 115184 Moscow} 

\email{koziakin.vs@mipt.ru} 

\keywords{Generalized spectral radius, spectrum maximizing products, 
different multiplicities of the same name factors, JSR Toolbox} 

\subjclass[2020]{Primary 15A18; Secondary 15A60, 65F15} 

\date{} 

\begin{abstract}
Recently, Bochi and Laskawiec constructed an example of a set of matrices $\{A,B\}$ having two different (up to cyclic permutations of factors) spectrum maximizing products, $AABABB$ and $BBABAA$. In this paper, we identify a class of matrix sets for which the existence of at least one spectrum maximizing product with an odd number of factors automatically entails the existence of another spectrum maximizing product. Moreover, in addition to the Bochi--Laskawiec example, the number of factors of the same name (factors of the form $A$ or $B$) in these matrix products turns out to be different. The efficiency of the proposed approach is confirmed by constructing an example of a set of $2{\times}2$ matrices $\{A,B\}$ that has spectrum maximizing products of the form $BAA$ and $BBA$.
\end{abstract} 

\maketitle 
\section{Introduction}\label{S:intro} 

Let $\setA=\{A_{1},\ldots,A_{m}\}$ be a set of $m$ real $d{\times}d$ matrices and ${\|\cdot\|}$ be a norm in~${\mathbb{R}}^{d}$. With every finite ordered tuple of symbols
\[ 
  \boldsymbol{\nu}= 
  (\nu_{1},\nu_{2},\ldots,\nu_{n})\in{\{1,\ldots,m\}}^{n},\qquad n\ge1,
\] 
we associate the matrix 
\begin{equation}\label{eq:Asigma} 
  A_{\boldsymbol{\nu}}=A_{\nu_{n}}\cdots A_{\nu_{2}}A_{\nu_{1}}
\end{equation} 
and define two numerical values: 
\begin{equation}\label{eq:spectrad} 
  \rho({\setA})= \limsup_{n\to\infty}\rho_{n}({\setA}),\qquad \bar{\rho}({\setA})= \limsup_{n\to\infty}\bar{\rho}_{n}({\setA}), 
\end{equation} 
where
\[ 
  \rho_{n}({\setA})=\max_{\boldsymbol{\nu}\in{\{1,\ldots,m\}}^{n}} \|A_{\boldsymbol{\nu}}\|^{1/n},\qquad \bar{\rho}_{n}({\setA})=\max_{\boldsymbol{\nu}\in{\{1,\ldots,m\}}^{n}} {\rho(A_{\boldsymbol{\nu}})}^{1/n}, 
\] 
and $\rho(A_{\boldsymbol{\nu}})$ denotes the spectral radius of the matrix $A_{\boldsymbol{\nu}}$.

The first limit in~\eqref{eq:spectrad}, which actually does not depend on the choice of the norm~$\|\cdot\|$, was introduced by Rota and Strang~\cite{RotaStr:IM60}, and the second somewhat later by Daubechies and Lagarias~\cite{DaubLag:LAA92}. Both quantities in~\eqref{eq:spectrad} are analogues of the well-known Gelfand formula~\cite{Gelf:MatSb41:e} for the spectral radius of the matrix, and therefore the first of them was called \emph{joint}, and the second \emph{generalized spectral radius} of a set of matrices~$\setA$. For bounded sets of matrices~$\setA$ the quantities $\rho({\setA})$ and $\bar{\rho}({\setA})$ coincide with each other~\cite{BerWang:LAA92, Breuillard2022} and in this case
\begin{equation}\label{Eq-sprad} 
  \bar{\rho}_{n}({\setA})\le \bar{\rho}({\setA})=\rho({\setA})\le \rho_{n}({\setA})\qquad\forall~n. 
\end{equation} 

Traditionally, the possibility of explicitly calculating the generalized spectral radius is associated with the fulfillment of the so-called \emph{finiteness hypothesis}, which assumes that the second limit~\eqref{eq:spectrad} is always achieved at some finite value of $n$, i.e., the leftmost inequality in~\eqref{Eq-sprad} turns into an equality for some $n$. This hypothesis was put forward in~\cite{LagWang:LAA95}, but subsequently refuted~\cite{BM:JAMS02}. Later, alternative counterexamples appeared~\cite{BTV:SIAMJMA03,Koz:CDC05:e}. The first ``explicit'' counterexample to the finiteness conjecture was constructed in~\cite{HMST:AdvMath11}, and general methods for constructing counterexamples of this kind were later developed in~\cite{MorSid:JEMS13,JenPoll:ETDS17}.

Although the finiteness conjecture has been refuted in the general case, the question of whether it holds for some specific sets of matrices remains relevant. Recently, a number of significant results have been obtained in this direction~\cite{BochiLas:SAIMJMAA24, Laskawiec:LAA25, Vladimirov:ArXiv24}.
\begin{definition}
Let $A_{\boldsymbol{\nu}}$, $\boldsymbol{\nu}= 
  (\nu_{1},\nu_{2},\ldots,\nu_{n})$, be a product of the form~\eqref{eq:Asigma}. Following~\cite{GugZen:LAA08}, we say that $A_{\boldsymbol{\nu}}$ is a \emph{spectrum maximizing product} for the set of matrices~$\setA$ if
  \[ 
    \bar{\rho}({\setA})=\rho(A_{\boldsymbol{\nu}})^\frac{1}{n}. 
  \] 
\end{definition} 

When defining spectrum maximizing products, one often adds the requirement that the matrix $A_{\boldsymbol{\nu}}$ is not a power of any other (shorter) spectrum maximizing product. In this work we do not require this.

Proving that some matrix product $A_{\boldsymbol{\nu}}$ of the form~\eqref{eq:Asigma} is spectrum maximizing is, as a rule, quite difficult. The following reasoning can help here: suppose we managed to find a vector norm $\|\cdot\|$ such that for each matrix $A_{i}\in\setA$ the inequality
\begin{equation}\label{Eq-extnorm}
 \|A_{i}x\|\le\rho(A_{\boldsymbol{\nu}})^\frac{1}{n}\|x\|,\qquad x\in\mathbb{R}^{d},
\end{equation}
holds. Then, by the definition of the joint spectral radius, the inequality $\rho({\setA})\le\rho(A_{\boldsymbol{\nu}})^\frac{1}{n}$ will hold, whence, by~\eqref{Eq-sprad} it follows that
\[ 
  \rho(A_{\boldsymbol{\nu}})^\frac{1}{n}\le \bar{\rho}({\setA})=\rho({\setA})\le \rho(A_{\boldsymbol{\nu}})^\frac{1}{n}. 
\]
Thus, $\rho(A_{\boldsymbol{\nu}})^\frac{1}{n}=\bar{\rho}({\setA})$, which implies that $A_{\boldsymbol{\nu}}=A_{\nu_{n}}\cdots A_{\nu_{2}}A_{\nu_{1}}$ is the spectrum maximizing product. In this case, inequality~\eqref{Eq-extnorm} will take the following form:
\begin{equation}\label{Eq-extnorm0} 
  \|A_{i}x\|\le\bar{\rho}({\setA})\|x\|, \qquad x\in\mathbb{R}^{d}. 
\end{equation} 

A norm in which inequality~\eqref{Eq-extnorm0} holds is called \emph{extremal} for the set of matrices $\setA$. This term apparently first appeared in~\cite{Bar:ACC95}. Later, in~\cite{Bar:AIT88-2:e} extremal norms were used as a basic tool for analyzing the growth rate of matrix products. Subsequently, this approach was widely used in numerous works, among which we highlight~\cite{Wirth:LAA02}. More detailed information about the growth rate of the norms of matrix products can be obtained in the case when the norm $\|\cdot\|$ for some~$\rho$ satisfies the equality
\begin{equation}\label{Eq-mane-bar} 
  \max\left\{\|A_{0}x\|,\|A_{1}x\|,\ldots,\|A_{m-1}x\|\right\}=\rho\|x\|
\end{equation}
for all $x\in \mathbb{R}^{d}$. A norm satisfying condition~\eqref{Eq-mane-bar} is usually called the \emph{Barabanov norm} corresponding to the set of matrices $\setA$. In~\cite[Thm.~2]{Bar:AIT88-2:e} it is shown that such a norm exists for
``almost all'' sets of matrices $\setA$, but unfortunately it is defined by some computationally nonconstructive limit procedure. Moreover, the equality~\eqref{Eq-mane-bar} can be satisfied if and only if $\rho=\rho(\setA)$, and therefore in view of~\eqref{Eq-sprad} any Barabanov norm is extremal.

The impetus for this work was the article~\cite{BochiLas:SAIMJMAA24}, in which an example of a set of matrices $\{A,B\}$ was constructed that has two different (up to cyclic permutations of factors) spectrum maximizing products of matrices, $AABABB$ and $BBABAA$. Below we propose another way to construct matrix sets, for which the number of spectrum maximizing products with an odd number of factors, if any, automatically turns out to be at least two. Moreover, in addition to the example from~\cite{BochiLas:SAIMJMAA24}, the number of factors of the same name (factors $A$ or $B$) in these matrix products is different.

In our approach, we develop the idea from~\cite{PW:LAA08} (proposed and used there, but not explicitly formulated by the authors) that the very structure of the set of matrices can significantly contribute to the answer to this question. Recall that in~\cite[Prop.~18]{PW:LAA08} the so-called ``symmetric'' sets of matrices $\setA=\{A_{1},A_{2},\ldots,A_{m}\}$  were considered, which have the property that, together with each matrix $A_{i}$, this set also contains the transposed matrix $A_{i}^{\mathtt{t}}$. This (symmetry) turned out to be sufficient for the Euclidean norm to be an extremal norm for a given family of matrices, and the generalized spectral radius was achieved on one of the matrix products of the form~${A_{i}^{\mathtt{t}}A_{i}}$. 

In Section~\ref{sec:main} (Theorem~\ref{thm:smp} and Corollary~\ref{cor:smp}) we will present the details of the implementation of the corresponding idea only to the extent that will be sufficient for construction in Section~\ref{sec:example} of an example of a set of matrices $\setA=\{A,B\}$ for which the spectrum maximizing products have the forms $BAA$ and $BBA$ (up to cyclic permutations of factors). The proof that in this example the matrix products $A_{\boldsymbol{\nu}}=BAA$ or $A_{\boldsymbol{\nu}}=BBA$ are spectrum maximizing will be done by constructing an appropriate norm $\|\cdot\|$, in which these matrix products satisfy inequality~\eqref{Eq-extnorm}.

\section{Main result}\label{sec:main} 

Let's start by defining the key concept in this section. We present this concept, as well as the following Lemma~\ref{lem:Lselfsim}, in accordance with the recommendations of an anonymous reviewer, who drew our attention to the fact that the use of Friedland's theorem~\cite[Thm.~2.2]{Friedland:AM83} can significantly simplify the study of the problem of simultaneous similarity of sets of matrices.
\begin{definition}\label{def:selfsim}
A set of matrices $\setA=\{A,B\}$, $A\neq B$, will be called \emph{$\tau$-per\-mut\-able} if there exists a mapping $\tau(X):=S^{-1}XS$ that performs a simultaneous similarity transformation between the sets of matrices $\{A,B\}$ and $\{B,A\}$, i.e., such that 
\[ 
\tau(A)=B,\quad \tau(B)=A. 
\]
\end{definition} 

In this definition, the matrix $S$ is naturally assumed to be nonsingular. Obviously, a set of matrices $\setA=\{A,B\}$ can be $\tau$-permutable only in the case when the matrices $A$ and $B$ are \emph{isospectral}, i.e., their spectra coincide:
\[ 
\sigma(A)=\sigma(B). 
\] 

If in Definition~\ref{def:selfsim} we take the mapping $\tau(X):=X^{\mathtt{t}}$ as $\tau$, then the set of matrices $\setA=\{A,B\}$ will be so-called ``symmetric''. Sets of matrices of this kind, as we have already mentioned, were introduced and studied in~\cite[Prop.~18]{PW:LAA08}.

In what follows, we will focus on irreducible matrix sets consisting of a pair of real matrices $A$ and $B$ of dimension $2{\times}2$. Recall that a set of matrices is called \emph{irreducible} if its matrices do not have common invariant subspaces, with the exception of the zero space and the entire space. For such sets of matrices, the $\tau$-permutability condition becomes very simple.

\begin{lemma}\label{lem:Lselfsim}
An irreducible set of matrices $\setA=\{A,B\}$ is permutable under some similarity transformation $\tau$ if and only if
\[
\tr(A)=\tr(B),\quad \det(A)=\det(B).
\]
\end{lemma}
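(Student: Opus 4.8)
The plan is to recast $\tau$-permutability as a problem of simultaneous similarity and then apply Friedland's theorem. Observe that a mapping $\tau(X)=S^{-1}XS$ satisfies $\tau(A)=B$, $\tau(B)=A$ precisely when the single nonsingular matrix $S$ conjugates the ordered pair $(A,B)$ into the ordered pair $(B,A)$; thus $\setA=\{A,B\}$ is $\tau$-permutable if and only if the tuples $(A,B)$ and $(B,A)$ are simultaneously similar. With this reformulation the necessity of the trace and determinant conditions is immediate: if such an $S$ exists then $S^{-1}AS=B$, so $A$ and $B$ are similar matrices and hence share the same characteristic polynomial, giving $\tr(A)=\tr(B)$ and $\det(A)=\det(B)$.

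For sufficiency I would invoke Friedland's theorem~\cite[Thm.~2.2]{Friedland:AM83}, which characterizes simultaneous similarity of tuples through equality of traces of words. For an irreducible pair of $2{\times}2$ matrices the Cayley--Hamilton relation $X^{2}=\tr(X)X-\det(X)I$ collapses every word in two letters down to a combination of $I$, the letters, and short products, so that a complete separating system of invariants is finite; concretely, two irreducible pairs $(A_{1},A_{2})$ and $(B_{1},B_{2})$ of $2{\times}2$ matrices are simultaneously similar if and only if
\[
  \tr(A_{1})=\tr(B_{1}),\quad \tr(A_{2})=\tr(B_{2}),\quad \det(A_{1})=\det(B_{1}),\quad \det(A_{2})=\det(B_{2}),\quad \tr(A_{1}A_{2})=\tr(B_{1}B_{2}).
\]
I would then apply this with $(A_{1},A_{2})=(A,B)$ and $(B_{1},B_{2})=(B,A)$. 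Both pairs are built from the same two matrices, so the pair $(B,A)$ is irreducible exactly when $\setA=\{A,B\}$ is, and the hypothesis of the lemma applies.

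It remains to check the five equalities for this specific choice. The first four read $\tr(A)=\tr(B)$, $\tr(B)=\tr(A)$, $\det(A)=\det(B)$, $\det(B)=\det(A)$, which are exactly the assumed conditions $\tr(A)=\tr(B)$ and $\det(A)=\det(B)$. The fifth equality becomes $\tr(AB)=\tr(BA)$, which holds automatically for any square matrices. Hence all invariants coincide and Friedland's theorem yields the desired conjugating matrix $S$, completing the sufficiency direction.

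I expect the main obstacle to lie not in this final bookkeeping but in pinning down the correct finite criterion: one must justify that for irreducible $2{\times}2$ pairs the five scalars above indeed form a complete system of simultaneous-similarity invariants (this is where irreducibility is essential, since for reducible pairs matching traces and determinants need not force conjugacy), and one must verify that Friedland's theorem is being applied in the regime where its trace conditions are both necessary and sufficient. Once that is secured, the symmetry $\tr(AB)=\tr(BA)$ does all the remaining work and renders the fifth condition vacuous.
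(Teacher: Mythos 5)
Your proposal is correct and follows essentially the same route as the paper: both reduce $\tau$-permutability to simultaneous similarity of the ordered pairs $(A,B)$ and $(B,A)$, invoke Friedland's theorem for irreducible pairs of $2\times2$ matrices, observe that the condition $\tr(AB)=\tr(BA)$ is automatic, and use the Cayley--Hamilton identity $\tr(X^{2})=\tr(X)^{2}-2\det(X)$ to pass between the $\tr(X^{2})$ invariants and the determinant conditions. The only cosmetic difference is that you handle necessity by the direct observation that $S^{-1}AS=B$ forces equal characteristic polynomials, whereas the paper reads both directions off the equality of Friedland's $5$-tuples.
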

\begin{proof}We use the theorem of
Friedland~\cite[Thm.~2.2]{Friedland:AM83}, according to which irreducible sets $\{A,B\}$ and $\{\tilde{A},\tilde{B}\}$ of $2{\times}2$~matrices are simultaneously similar if and only if the $5$-tuples
\[
\bigl(\tr(A),~\tr(A^{2}),~\tr(B),~\tr(B^{2}),~\tr(AB)\bigr)
\]
and
\[
\bigl(\tr(\tilde{A}),~\tr(\tilde{A}^{2}),~\tr(\tilde{B}),
~\tr(\tilde{B}^{2}),~\tr(\tilde{A}\tilde{B})\bigr)
\]
coincide. Then the sets of matrices $\{A,B\}$ and $\{B,A\}$ are simultaneously similar if and only if
\[
\bigl(\tr(A),\tr(A^{2}),\tr(B),\tr(B^{2}),\tr(AB)\bigr)=
\bigl(\tr(B),\tr(B^{2}),\tr(A),\tr(A^{2}),\tr(BA)\bigr).
\]
Now let us note that $\tr(AB)=\tr(BA)$, and for $2{\times}2$~matrices the following equalities hold:
\[
\tr(A^{2})=\tr(A)^{2}-2\det(A),\quad \tr(B^{2})=\tr(B)^{2}-2\det(B).
\]
This implies that the last two $5$-tuples coincide if and only if $\tr(A)=\tr(B)$ and $\det(A)=\det(B)$. The lemma is proved.
\end{proof}

Further, it will be convenient for us to limit ourselves to considering either matrices of ``rotation with stretching-contraction along the coordinate axes'' (see Example~\ref{ex:alt}), or matrices of a more ``exotic'' type, which can also be treated as matrices of rotation with stretching-contraction along the coordinate axes, but at the same time additionally have a zero upper-diagonal element (see Example~\ref{ex:main}).

\begin{example}\label{ex:alt} 
Let
 \begin{equation}\label{eq:setA-alt}
 A=\begin{pmatrix*}[r] \cos\varphi&-\frac{1}{\varkappa}\sin\varphi\\ \varkappa\sin\varphi&\cos\varphi
 \end{pmatrix*},\quad
 B=\begin{pmatrix*}[r] \cos\varphi&-\varkappa\sin\varphi\\ \frac{1}{\varkappa}\sin\varphi&\cos\varphi
 \end{pmatrix*}
 \end{equation}
  be a pair of matrices depending on the parameters $\varphi$ and $\varkappa>1$. The set of matrices $\setA=\{A,B\}$ is irreducible only for $\varphi\neq 0,\pi$, since only in this case the matrices~\eqref{eq:setA-alt} do not have nontrivial invariant subspaces. In this case, the set $\{A,B\}$ becomes $\tau$-permutable if we take the following similarity map as $\tau$:
  \begin{equation}\label{eq:def-tau-alt} 
    \tau(X)=S^{-1}XS,\quad\text{where}\quad X\in\setA,\quad S=\begin{pmatrix*}[r] 0&-1\\ 1&0 
    \end{pmatrix*}. 
  \end{equation} 
\end{example} 

\begin{example}\label{ex:main}
Let
 \begin{equation}\label{eq:setA-main}
 A=\begin{pmatrix*}[c] 0&-\frac{1}{\varkappa}\\ \varkappa&2\cos\varphi
 \end{pmatrix*},\quad
 B=\begin{pmatrix*}[c] 0&-\varkappa\\ \frac{1}{\varkappa}&2\cos\varphi
 \end{pmatrix*}
 \end{equation}
  be a pair of matrices depending on the parameters $\varphi\neq 0,\pi$ and $\varkappa>1$. As in the previous example, for $\varphi\neq 0,\pi$ the matrices~\eqref{eq:setA-main} are similar to rotation matrices that do not have nontrivial invariant subspaces, and therefore the set of matrices $\setA=\{A,B\}$ is irreducible. This set becomes $\tau$-permutable if we take the following similarity mapping as $\tau$:
  \begin{equation}\label{eq:def-tau-main}
    \tau(X)=S^{-1}XS,\quad\text{where}\quad X\in\setA,\quad
    S=\begin{pmatrix*}[c]
      \frac{2\varkappa\cos\varphi}{\varkappa^{2}+1}&1\\[6pt]
      -1&-\frac{2\varkappa\cos\varphi}{\varkappa^{2}+1}
    \end{pmatrix*}.
  \end{equation}
\end{example} 

The reason why we pay special attention to the class of $\tau$-permutable sets of matrices is explained by the following theorem.
\begin{theorem}\label{thm:smp}
Let the set of matrices $\setA=\{A,B\}$ be $\tau$-permutable under some mapping $\tau$. Then for each product of matrices
 \begin{equation}\label{eq:prodM}
 M=M_{k}M_{k-1}\cdots M_{1},\qquad M_{i}\in\setA\quad\text{for}\quad i=1,2,\ldots,k,
 \end{equation}
the spectrum of the matrix $\tau(M)$ coincides with the spectrum of the matrix $M$.

If the number of factors $M_{i}$ in~\eqref{eq:prodM} is odd, then the matrices $M$ and $\tau(M)$ have different numbers of factors of the form $A$ (as well as of the form $B$), and therefore no cyclic permutation of factors of $M$ can coincide with any cyclic permutation of factors of $\tau(M)$.
\end{theorem}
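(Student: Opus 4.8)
The plan is to prove the two assertions separately, treating the spectral identity first and then the combinatorial ``factor count'' claim, since the latter is elementary once the former is in place.

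\medskip

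\textbf{Spectral coincidence.} The map $\tau$ is a conjugation $\tau(X)=S^{-1}XS$, and since $\tau$ exchanges $A$ and $B$, it maps each factor $M_i\in\setA$ to another element of $\setA$. The key observation is that conjugation is multiplicative: for any product $M=M_kM_{k-1}\cdots M_1$ we have
\[
\tau(M)=S^{-1}M_kM_{k-1}\cdots M_1 S
=(S^{-1}M_kS)(S^{-1}M_{k-1}S)\cdots(S^{-1}M_1S)
=\tau(M_k)\tau(M_{k-1})\cdots\tau(M_1).
\]
In particular $\tau(M)=S^{-1}MS$ is itself similar to $M$, and similar matrices share the same spectrum, so $\sigma(\tau(M))=\sigma(M)$. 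This first part requires nothing beyond the multiplicativity of conjugation and similarity-invariance of the spectrum; there is no real obstacle here.

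\medskip

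\textbf{Factor counts under $\tau$.} Because $\tau(A)=B$ and $\tau(B)=A$, the operation $\tau$ replaces every occurrence of $A$ in the word $M$ by $B$ and every occurrence of $B$ by $A$. Hence if $M$ contains $a$ factors equal to $A$ and $b$ factors equal to $B$ with $a+b=k$, then $\tau(M)$ (read as the corresponding word $\tau(M_k)\cdots\tau(M_1)$) contains exactly $b$ factors of the form $A$ and $a$ of the form $B$. When $k$ is odd, $a\neq b$ necessarily, since $a=b$ would force $k=2a$ even; therefore $M$ and $\tau(M)$ have genuinely different numbers of $A$-factors (and of $B$-factors).

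\medskip

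\textbf{Impossibility of matching cyclic permutations.} The final clause follows because a cyclic permutation of a word merely reorders its factors without changing how many of each symbol it contains. Thus every cyclic permutation of $M$ has $a$ factors of type $A$, while every cyclic permutation of $\tau(M)$ has $b\neq a$ factors of type $A$. Two words with different $A$-counts cannot be equal as words, so no cyclic permutation of $M$ coincides with any cyclic permutation of $\tau(M)$. The only subtlety worth flagging is the distinction between $M$ as a \emph{matrix} (where $\tau(M)=S^{-1}MS$ is genuinely similar to $M$) and $M$ as a \emph{word} in the alphabet $\{A,B\}$ (where $\tau$ acts by swapping letters); the combinatorial argument lives entirely at the word level and is immune to any accidental numerical coincidences among the matrix entries.
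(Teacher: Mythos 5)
Your proof is correct and follows essentially the same route as the paper's: multiplicativity of the conjugation $\tau(X)=S^{-1}XS$ plus similarity-invariance of the spectrum for the first claim, and the parity argument on the counts of $A$- and $B$-factors (which are invariant under cyclic permutation) for the second. Your explicit remark distinguishing $M$ as a matrix from $M$ as a word is a useful clarification that the paper leaves implicit, but it does not change the substance of the argument.
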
 

We will put the proof of the theorem in Section~\ref{sec:proof-smp}. We also formulate a consequence of this theorem that does not require special proof and relates to spectrum maximizing products of matrices.
\begin{corollary}\label{cor:smp}
Let, under the conditions of Theorem~\ref{thm:smp}, the product of matrices~\eqref{eq:prodM} be spectrum maximizing and consist of an odd number of factors. Then $\tau(M)$ is also a spectrum maximizing product other than $M$ (up to cyclic permutations), and the matrices $M$ and $\tau(M)$ have different numbers of factors of the same name, i.e., factors $A$ (and also $B$).
\end{corollary}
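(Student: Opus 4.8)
The plan is to obtain the corollary as a direct consequence of Theorem~\ref{thm:smp}; indeed the author already flags that it ``does not require special proof.'' The one observation that must be made explicit is that $\tau(M)$ is again an admissible product of the form~\eqref{eq:prodM} with exactly $k$ factors drawn from $\setA$. First I would record that the similarity map $\tau(X)=S^{-1}XS$ is multiplicative: for any matrices $X,Y$ one has $\tau(XY)=S^{-1}XYS=(S^{-1}XS)(S^{-1}YS)=\tau(X)\tau(Y)$. Applying this to~\eqref{eq:prodM} gives
\[
\tau(M)=\tau(M_{k})\tau(M_{k-1})\cdots\tau(M_{1}),
\]
and since, by Definition~\ref{def:selfsim}, $\tau$ maps $\{A,B\}$ onto $\{B,A\}=\setA$, each factor $\tau(M_{i})$ again lies in $\setA$. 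Hence $\tau(M)$ is a legitimate product of $k$ factors from $\setA$, i.e.\ of the form~\eqref{eq:prodM} with the same number of factors as $M$.

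Next I would invoke the spectral invariance supplied by Theorem~\ref{thm:smp}. Since $\sigma(\tau(M))=\sigma(M)$, the spectral radii agree, $\rho(\tau(M))=\rho(M)$. The hypothesis that $M$ is spectrum maximizing means $\bar{\rho}(\setA)=\rho(M)^{1/k}$, so combining these,
\[
\rho(\tau(M))^{1/k}=\rho(M)^{1/k}=\bar{\rho}(\setA).
\]
Because $\tau(M)$ was shown to be a product of $k$ factors from $\setA$, this equality is exactly the defining condition for $\tau(M)$ to be a spectrum maximizing product for $\setA$.

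It remains to argue that $\tau(M)$ is genuinely a \emph{new} product, distinct from $M$ up to cyclic permutations, with differing multiplicities of the same-name factors. This is precisely the second assertion of Theorem~\ref{thm:smp}: as $k$ is odd, if $M$ contains $p$ copies of $A$ then $\tau(M)$ contains $k-p$ of them, and $p=k-p$ is impossible for odd $k$; hence the numbers of factors of the form $A$ (and likewise of $B$) differ. Since cyclic permutation preserves factor multiplicities, no cyclic permutation of $M$ can coincide with any cyclic permutation of $\tau(M)$, which yields both the distinctness and the claim about differing numbers of same-name factors. I do not anticipate a real obstacle, since every ingredient is already packaged in Theorem~\ref{thm:smp}; the only point demanding care is the closure remark of the first paragraph, without which the equality $\rho(\tau(M))=\rho(M)$ would not by itself certify that $\tau(M)$ actually competes in the maximization defining $\bar{\rho}(\setA)$.
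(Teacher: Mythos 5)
Your proposal is correct and follows essentially the same route as the paper: the paper declares the corollary to need no separate proof, and the reasoning you supply (multiplicativity of $\tau$, hence $\tau(M)$ is again a product of $k$ factors from $\setA$ with the same spectrum, hence spectrum maximizing; then the odd-length parity argument on factor counts) is exactly what appears in the paper's proof of Theorem~\ref{thm:smp} in Section~\ref{sec:proof-smp}. Your explicit remark that $\tau$ maps $\setA$ onto itself, so that $\tau(M)$ genuinely competes in the maximization defining $\bar{\rho}(\setA)$, is a worthwhile clarification but not a departure from the paper's argument.
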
 

\begin{example}\label{ex:smp-main}
Let $\setA=\{A,B\}$ be a $\tau$-permutable set of matrices from Example~\ref{ex:alt} or~\ref{ex:main}. Let it already be proven that for this set of matrices the product of matrices $BAA$ (and hence its cyclic permutations $ABA$ and $AAB$) is spectrum maximizing  for some values of the parameters $\varkappa$ and $\varphi$. Then, by Theorem~\ref{eq:prodM}, the matrix products $BBA$, $ABB$, and $BAB$ are also spectrum maximizing, and all of them differ from the matrix product~$BAA$ and its cyclic permutations.
\end{example} 

Note that Corollary~\ref{cor:smp} is conditional: it does not establish the existence of spectrum maximizing products for the set $\setA$, but only specifies the conditions in terms of properties of the set $\setA$ under which the spectrum maximizing products are not unique. Proving the existence of spectrum maximizing products for the set~$\setA$ is a separate problem, which can be solved according to the scheme outlined in the introduction, by constructing a norm $\|\cdot\|$ satisfying inequality~\eqref{Eq-extnorm}.

Of particular interest to us will be the sets of matrices~\eqref{eq:setA-alt} and~\eqref{eq:setA-main} with the parameter value $\varphi=\frac{2\pi}{3}$. This seemingly rather strange and unexpected choice of the parameter $\varphi$ is explained by the fact that numerical modeling gave reason to believe (but did not prove!) that it is with this value of the parameter $\varphi$ that the sets of matrices~\eqref{eq:setA-alt} and~\eqref{eq:setA-main} can have, for some $\varkappa>1$, spectrum maximizing products of length $3$. Moreover, below we will limit ourselves to a detailed analysis of only the set of matrices $\setA=\{A,B\}$ with matrices~\eqref{eq:setA-main}, which for $\varphi=\frac{2\pi}{3}$ take the simple form
\begin{equation}\label{eq:AB2}
 A=\begin{pmatrix*}[c] 0&-\frac{1}{\varkappa}\\ \varkappa&-1
 \end{pmatrix*},\quad
 B=\begin{pmatrix*}[c] 0&-\varkappa\\ \frac{1}{\varkappa}&-1
 \end{pmatrix*},
\end{equation}
significantly simplifying symbolic manipulations with these matrices in further considerations.

\section{Basic example}\label{sec:example}
In this section, we consider the matrix set $\setA=\{A,B\}$ from Example~\ref{ex:main}, which is $\tau$-permutable with respect to a mapping~$\tau$ of the form~\eqref{eq:def-tau-main}. Note that the mapping $\tau(X):=S^{-1}XS$ is \emph{multiplicative}, i.e.,
\[
 \tau(X_{k}X_{k-1}\cdots X_{1})= \tau(X_{k})\tau(X_{k-1})\cdots \tau(X_{1})
\]
for any set of matrices $X_{1},X_{2}\ldots,X_{k}$. Therefore
\[
\tau(BAA)=\tau(B)\tau(A)\tau(A)=ABB. 
\]
Then, by Theorem~\ref{thm:smp}, the matrices $BAA$ and $ABB$ are isospectral: $\sigma(BAA)=\sigma(ABB)$. Since the matrix $BAA$ is isospectral to the products of its cyclic permutations $AAB$ and $ABA$, and the matrix $ABB$ is isospectral to the products of its cyclic permutations $BBA$ and $BAB$, then all triple products of the matrices $A$ and $B $, other than the products $AAA$ and $BBB$, are also isospectral: 
\[ 
\sigma(BAA)=\sigma(AAB)=\sigma(ABA)=\sigma(ABB)=\sigma(BBA)=\sigma(BAB). 
\]
Hence, similar equalities also hold for the spectral radii of the indicated matrices:
\[ 
\lambda:=\rho(BAA)=\rho(AAB)=\rho(ABA)=\rho(ABB)=\rho(BBA)=\rho(BAB).	
\] 

Further we will assume that in the equalities defining the matrices $A$ and $B$ in Example~\ref{ex:main}, the value of $\varphi$ is determined by the equality $\varphi= \frac{2\pi}{3}$. In this case, the matrices $A$ and $B$ take the form~\eqref{eq:AB2} and, as is easy to calculate,
\begin{equation}\label{eq:baa-bba}
BAA 
=\begin{pmatrix*}[c]
	\varkappa^{2}&0\\[6pt]
	\varkappa-\frac{1}{\varkappa}&\frac{1}{\varkappa^{2}}
\end{pmatrix*},\qquad
BBA 
=\begin{pmatrix*}[c]
	\varkappa^{2}&\frac{1}{\varkappa}-\varkappa\\[6pt]
	0&\frac{1}{\varkappa^{2}}
\end{pmatrix*}.
\end{equation}
Both of these matrices have unit determinant and the same trace:
$\tr(BAA)=\tr(BBA)=\varkappa^{2}+\frac{1}{\varkappa^{2}}$. Therefore
\begin{equation}\label{eq:tau-lambda}
\lambda=\varkappa^{2}
\end{equation}
is their common maximum eigenvalue.

Our goal is to show in the remainder of this section that in this case the generalized spectral radius $\bar{\rho}(\setA)$ of a set of matrices~\eqref{eq:AB2} is given by the equality
\[
 \bar{\rho}(\setA)=\lambda^{1/3},
\]
and the products of matrices $BAA$ and $BBA$ with matrices $A$ and $B$ of the form~\eqref{eq:AB2} are spectrum maximizing. To do this, following the scheme outlined in the introduction, we will show that for the set of matrices~\eqref{eq:AB2}, a norm $\|\cdot\|$ can be found in which the inequalities
\begin{equation}\label{eq:Anorm}
 \|Ax\|, \|Bx\| \le \lambda^{1/3}\|x\|,\qquad x\in\mathbb{R}^{2},
\end{equation}
are satisfied. Technically, it will be easier to do this not for the set of matrices $\setA=\{A,B\}$, but for the ``normalized'' set of matrices $\setAt=\{\tA,\tB\}$, where
\begin{equation}\label{eq:def-tilde-ab}
 \tA=\frac{1}{\lambda^{1/3}}A,\qquad \tB=\frac{1}{\lambda^{1/3}}B.
\end{equation}

In terms of the matrices $\tA$ and $\tB$, condition~\eqref{eq:Anorm} takes the form 
\begin{equation}\label{eq:tAnorm} 
  \|\tA x\|, \|\tB x\| \le \|x\|,\qquad x\in\mathbb{R}^{2}. 
\end{equation}
In addition, since the matrices $A$ and $B$ of the form~\eqref{eq:AB2} satisfy the equalities $AAA = BBB = I$ (precisely in order to ensure the fulfillment of these equalities, the quantity $\varphi$ was chosen equal to $\frac{2\pi}{3}$), then
\begin{equation}\label{eq:tilde-aaa}
 \tA\tA\tA=\tB\tB\tB=\frac{1}{\lambda}I\quad\Longrightarrow\quad \rho(\tA\tA\tA)=\rho(\tB\tB\tB )=\frac{1}{\lambda},
\end{equation}
and the remaining triple products of the matrices $\tA$ and $\tB$ will have 
simple maximum eigenvalue $1$, and thus unit spectral radius:
\[
  \rho(\tB\tA\tA)=\rho(\tA\tA\tB)=\rho(\tA\tB\tA)=\rho(\tA\tB\tB)=
  \rho(\tB\tB\tA)=\rho(\tB\tA\tB)= 1.
\]
 
\subsection{Extremal norm construction}\label{ssec:norm} 
We will determine the norm $\|\cdot\|$ required in~\eqref{eq:tAnorm} by specifying the boundary
\[
 S=\{x: \|x\|=1\}
\]
of its unit ball. In addition, following the ideas developed in~\cite{GugProt:FCM13, JSRToolbox}, the set $S$ will be sought in the form of a balanced (centrally symmetric) convex dodecagon whose vertices $v_{1},v_{2},\ldots ,v_{12}$ coincide with suitably scaled plus-minus eigenvectors of the matrices
\[
 \tB\tA\tA,\quad \tA\tA\tB,\quad \tA\tB\tA,\quad \tA\tB\tB,\quad \tB\tB\tA,\quad \tB\tA\tB,
\]
corresponding to the unit eigenvalue of these matrices.

To clarify the procedure for ``scaling'' the mentioned eigenvectors chosen as 
vertices of the polygon $S$, recall that for matrices $A$ and $B$ from the 
set~\eqref{eq:AB2} the value~$\lambda$ is expressed through~$\varkappa$ by~\eqref{eq:tau-lambda}. In this case, the eigenvectors $v$ and $w$ of the matrices $\tB\tA\tA=\frac{1}{\lambda}BAA$ and $\tB\tB\tA=\frac{1}{\lambda}BBA$, respectively, corresponding to the eigenvalue $1$, by virtue of~\eqref{eq:baa-bba} have the form
\[ 
  v=\begin{pmatrix*}[c] 
    1\\[2pt] \frac{\varkappa}{1+\varkappa^{2}} 
  \end{pmatrix*},\quad 
  w=\begin{pmatrix*}[c] 
    1\\[2pt]0\vphantom{\frac{\varkappa}{1+\varkappa^{2}}} 
  \end{pmatrix*}, 
\] 
i.e.,
\begin{equation}\label{eq:eigen} 
  v = \tB\tA\tA v,\quad w=\tB\tB\tA w. 
\end{equation} 

As the first two vertices of the constructed dodecagon $S$ we take the points
\[
 v_{1}=\mu v,\quad v_{2}= w,
\]
where $\mu>0$ is the ``scaling parameter'' to be further defined, and then 
add the points
\[
 -v_{1},\quad \pm\tA v_{1},\quad \pm\tA\tA v_{1},\qquad -v_{2},\quad \pm\tA v_{2},\quad \pm\tB\tA v_{2}
\]
to the set of vertices of the constructed dodecagon. By construction, the points $\pm v_{1}$, $\pm\tA v_{1}$, $\pm\tA\tA v_{1}$ are eigenvectors of the matrix $\tB\tA\tA$ and cyclic permutations of its factors $\tA\tB\tA$ and $\tA\tA\tB$, and the points $\pm v_{2}$, $\pm\tA v_{2}$, $\pm\tB\tA v_{2}$ are eigenvectors of the matrix $\tB\tB\tA$ and cyclic permutations of its factors $\tA\tB\tB$ and $\tB\tA\tB$.

Let us introduce the following notation for the resulting $12$ points:
\begin{alignat}{6}\label{eq:s1-s6} 
 v_{1} & =\mu v,~ & v_{2} & =w,~ & v_{3} & =-\tA v_{1},~ & v_{4} & =-\tA v_{2},~ & v_{5} & =-\tA v_{3},~ & v_{6} & =-\tB v_{4}, \\ \label{eq:s7-s12} v_{7} & =-v_{1},~ & v_{8} & =-v_{2},~ & v_{9} & =-v_{3},~ & v_{10} & =-v_{4},~ & v_{11} & =-v_{5},~ & v_{12} & =-v_{6}, 
\end{alignat} 
and also for their images when applying the mappings $\tA$ and $\tB$:
\begin{equation}\label{eq:ab} 
  a_{i}=\tA v_{i},\quad b_{i}=\tB v_{i},\qquad i=1,2,\ldots,12. 
\end{equation} 

Finally, denote by $S$ the dodecagon with vertices $v_{i}$. Note that the vertices of~$S$ are divided into four groups of points, which are cyclically transformed into each other by applying suitable matrix~$\tA$ or~$\tB$:
\begin{gather*} 
  v_{1}\stackrel{\tA}{\longrightarrow}v_{9}\stackrel{\tA}{\longrightarrow}v_{5}\stackrel{\tB}{\longrightarrow}v_{1},\quad v_{7}\stackrel{\tA}{\longrightarrow}v_{3}\stackrel{\tA}{\longrightarrow}v_{11}\stackrel{\tB}{\longrightarrow}v_{7},\\ v_{2}\stackrel{\tA}{\longrightarrow}v_{10}\stackrel{\tB}{\longrightarrow}v_{6}\stackrel{\tB}{\longrightarrow}v_{2},\quad v_{8}\stackrel{\tA}{\longrightarrow}v_{4}\stackrel{\tB}{\longrightarrow}v_{12}\stackrel{\tB}{\longrightarrow}v_{8}. 
\end{gather*} 

\begin{remark}\label{rem:s-expl}
At first glance, the way the points $v_{i}$ are chosen looks rather strange. The explanation is that in the examples of constructing the polygon $S$ for the case $\varkappa=1.331$ and some randomly selected values of $\mu$ shown in Figure~\ref{F:0}, it was precisely this choice of the points $v_{i}$ that ensured their enumeration in ascending order of indices when traversing the vertices of the polygon $S$ clockwise around the origin.

In Section~\ref{app:rows-dispose} it will be shown that for any pair of parameters $\varkappa>1$ and $\mu>0$ no two rays of the form $R(v_{i})=\{x\in\mathbb{R}^{2}: x=t
v_{i},~t\ge0\}$, $i=1,2,\ldots,12$, coincide with each other. Moreover, the cyclic order of the points $v_{1},v_{2},\ldots,v_{12},v_{1}$ when traversing the vertices of the polygon $S$ clockwise around the origin remains the same for an arbitrary choice of the parameters $\varkappa>1$ and $\mu>0$ used in constructing the polygon~$S$.
\end{remark} 

It will be convenient for us to express the definitions of the points $v_{i}, a_{i}$ and $b_{i}$ with indices $i=1,\ldots,6$, as functions of the vectors $v$ and $w$:
\begin{equation}\label{eq:sab}% 
 \left.
 \begin{aligned}
 v_{1}&=\mu v,\quad&a_{1}&=\tA v_{1}=v_{9},\quad&b_{1}&=\tB v_{1}=\mu\tB v,\\
 v_{2}&=w,&a_{2}&=\tA v_{2}=v_{10},\quad&b_{2}&=\tB v_{2}=\frac{1}{\lambda}v_{10},\\
 v_{3}&=-\tA v_{1}=-\mu\tA v,\quad&a_{3}&=\tA v_{3}=v_{11},\quad&b_{3}&=\tB v_{3}=-\mu\tB\tA v,\\
 v_{4}&=-\tA v_{2}=-\tA w,\quad&a_{4}&=\tA v_{4}=-\tA\tA w,\quad&b_{4}&=\tB v_{4}=v_{12},\\
 v_{5}&=-\tA v_{3}=\mu\tA\tA v\quad&a_{5}&=\tA v_{5}=\frac{1}{\lambda}v_{1},\quad&b_{5}&=\tB v_{5}=v_{1},\\
 v_{6}&=-\tB v_{4}=\tB\tA w,\quad&a_{6}&=\tA v_{6}=\tA\tB\tA w,\quad&b_{6}&=\tB v_{6}=v_{2}. 
 \end{aligned}\right\} 
\end{equation}% 
Expressions for $v_{1},\ldots,v_{6}$ in~\eqref{eq:sab} follow from their definition, and expressions for $a_{1}, a_{2}, a_{3}, a_ {5}, b_{2}, b_{4}, b_{5}$, and $b_{6}$ follow from the chains of equalities given below, which in turn are a consequence of relations~\eqref{eq:tilde-aaa} and~\eqref{eq:eigen}:%
\begin{align*} 
 a_{1} & =\tA v_{1}=-v_{3}=v_{9},\\
 a_{2} & =\tA v_{2}=-v_{4}=v_{10},\\
 a_{3} & =\tA v_{3}=-v_{5}=v_{11},\\
 a_{5} & =\tA v_{5}=-\tA\tA v_{3}=\tA\tA\tA v_{1}=\frac{1}{\lambda}v_{1},\\
 b_{2} & =\tB v_{2}=\tB\tB\tB\tA v_{2}=\frac{1}{\lambda}\tA v_{2} =-\frac{1}{\lambda}v_{4}=\frac{1}{\lambda}v_{10},\\
 b_{4} & =\tB v_{4}=-v_{6}=v_{12},\\
 b_{5} & =\tB v_{5}=-\tB\tA v_{3}=\tB\tA\tA v_{1}=v_{1},\\
 b_{6} & =\tB v_{6}=-\tB\tB v_{4}=\tB\tB\tA v_{2}=v_{2}. 
\end{align*} 

\begin{remark}\label{rem:relations} 
Relations~\eqref{eq:sab} do not depend on the specific form of the matrices~$A$ and~$B$ or the choice of the eigenvectors $v$ and $w$ of the matrices $BAA$ and $BBA$. For their validity, it is only important that relations~\eqref{eq:tilde-aaa} be satisfied.
\end{remark} 

\begin{remark}\label{rem:kappa}
In all our constructions we further use the parameter $\varkappa=1.331=1.1^{3}$. This choice of $\varkappa$ is optional; it is due only to the fact that in this case, according to~\eqref{eq:AB2} and~\eqref{eq:def-tilde-ab}, both the matrices $A$ and $B$ as well as the matrices $\tA$ and $\tB$ turn out to be rational, which is nice in itself.
\end{remark} 

\begin{remark}\label{rem:badcases}
Figure~\ref{F:0} shows two examples of constructing dodecagons $S$ for 
$\varkappa=1.331$ and two random values of the parameter $\mu$. 
Both of these examples were ``unsuccessful'' from the point of view of 
wanting to construct the norm required in~\eqref{eq:tAnorm}, and demonstrate 
the typical difficulties encountered in constructing such a norm. Thus, in Figure~\ref{F:0a} the constructed dodecagon $S$ is not convex, and therefore cannot be the boundary of the unit ball for any norm. The polygon $S$ in Figure~\ref{F:0b}, although convex, does not satisfy the relation $\tB S\subseteq S$.
\end{remark} 

\begin{figure}[htbp!] 
  \centering \mbox{}\hfill\subcaptionbox{Case $\varkappa=1.331$ and $\mu=1.04$\label{F:0a}} {\includegraphics*[width=0.49\textwidth]{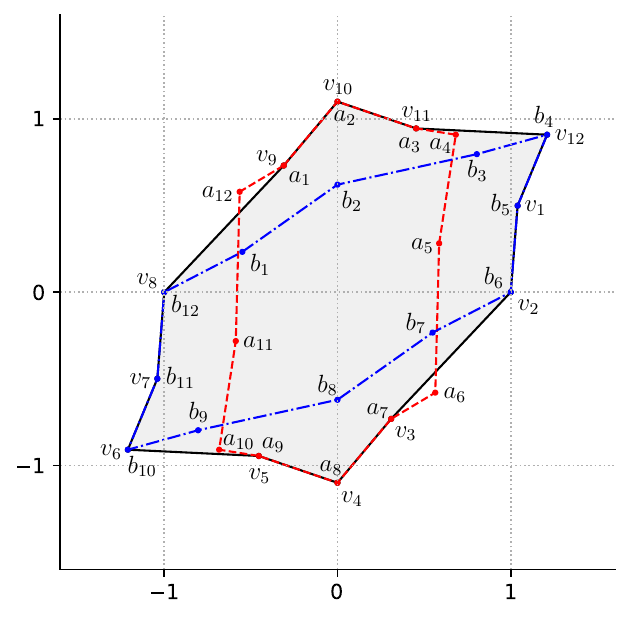}} \hfill\subcaptionbox{Case $\varkappa=1.331$ and $\mu=1.36$\label{F:0b}} {\includegraphics*[width=0.49\textwidth]{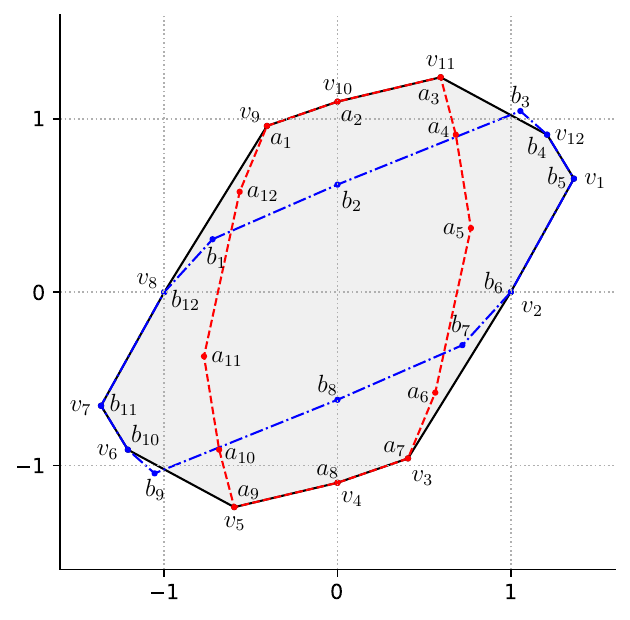}} \hfill\mbox{}\caption{Polygon $S$ (solid line, gray background) and its images $\tA S$ (dashed line) and $\tB S$ (dash-dotted line)}\label{F:0} 
\end{figure} 

\subsection{Selection of scaling parameter}\label{ssec:mu}
As follows from Remark~\ref{rem:badcases}, in order for the polygon $S$ to go from a ``candidate to be the unit sphere'' of some norm to a true unit sphere of the norm required in~\eqref{eq:tAnorm}, it is necessary to select the scaling parameter $\mu$ in such a way that the following requirements are met: 
\begin{enumerate}[(i)] 
  \item\label{item1} the polygon $S$ must be convex; 
  \item\label{item2} the inclusions $a_{i}=\tA v_{i}\subseteq S$, $b_{i}=\tB v_{i}\subseteq S$ must be satisfied for $i=1,2,\ldots,12$ or, equivalently, since the polygon $S$ is balanced, that the inclusions ${a_{i},b_{i}\in S}$ must be satisfied for $i=1,2,\ldots,6$.
\end{enumerate} 

\begin{remark}\label{rem:redundancy}
Condition~\ref{item1} is actually redundant and does not require checking. 
Indeed, suppose that the polygon $S$ with vertices $v_{i}$, 
$i=1,2,\ldots,12$, satisfies condition~\ref{item2}, but it is not convex. 
Taking then the convex hull $\tilde{S}=\co S$ of the polygon~$S$, we find 
that for $\tilde{S}$ both conditions~\ref{item1} and~\ref{item2} will 
already be satisfied, and, as a consequence, the inclusions 
$\tA\tilde{S}\subseteq\tilde{S}$ and $\tB\tilde{S} \subseteq\tilde{S}$ hold, too.
\end{remark} 

Taking this remark into account, to complete the construction of the norm 
required in~\eqref{eq:tAnorm}, it is enough for us to prove only the 
existence of such a value of the parameter $\mu$ that would ensure the 
fulfillment of condition~\ref{item2}. This task is simplified by the fact 
that due to~\eqref{eq:sab} the inclusions
\[
 a_{1},a_{2},a_{3},a_{5}\in S, \quad b_{2},b_{4},b_{5},b_{6}\in S
\]
hold automatically for all values of $\varkappa>1$ and $\mu>0$. 
Therefore, we only need to find such values of the parameter $\mu$ for which only $4$ ``nonobvious'' inclusions $a_{4},a_{6},b_{1},b_{3}\in S$ will be satisfied, which are more convenient for us to replace with equivalent (due to the equality $b_{1}=-b_{7}$) ones:
\begin{equation}\label{eq:inS} 
  a_{4},a_{6},b_{3},b_{7}\in S. 
\end{equation}
The problem is made even more concrete if we note that inclusions~\eqref{eq:inS} are satisfied if and only if each of the points $z=a_{4},a_{6},b_{3},b_{7}$ belongs to one of the triangles
\[
 \triangle xy0 :=\{sx+ty:~ s,t\ge 0,~ s+t\le1\}
\]
with vertices ${x,y,0}$, where $x=v_{i}, y=v_{i+1}$, $i=1,2,\ldots,12$ (assuming that ${v_{ 13}=v_{1}}$); see Figure~\ref{F:0}.

It will be convenient for us to check whether a certain point $z$ belongs to the triangle $\triangle xy0$ in two steps: first check whether the point $z$ belongs to the sector
\[
 S(x,y):=\{sx+ty:~ s,t\ge 0\},
\]
containing the triangle $\triangle xy0$, and only then check that, under the condition $z\in S(x,y)$, the point $z$ belongs to the triangle $\triangle xy0$. We present the corresponding conditions in Lemma~\ref{lem:in-triangle} in a form convenient for further application. But first, recall that the sector $S(x,y)$ does not degenerate into a ray, and the triangle $\triangle xy0$ does not degenerate into a point or a straight line segment if and only if the vectors~$x$ and~$y$ are linearly independent, which is equivalent to the inequality
\begin{equation}\label{eq:Lcond}
 (x,Ty)\neq0,\quad\text{where}\quad T=\begin{pmatrix*}[r] 0&-1\\ 1&0
 \end{pmatrix*},
\end{equation}
and $(\cdot,\cdot)$ is the Euclidean scalar product in $\mathbb{R}^{2}$.
\begin{lemma}\label{lem:in-triangle}
Let $x,y,z\in\mathbb{R}^{2}$ be a triple of points, the first two of which,~$x$ and~$y$, satisfy condition~\eqref{eq:Lcond}. Then $z\in S(x,y)$ if and only if
 \begin{equation}\label{eq:insect}
 s(x,y,z):=\frac{(z,Ty)}{(x,Ty)}\ge0,\quad t(x,y,z):=\frac{(z,Tx)} {(y,Tx)}\ge0.
 \end{equation}
And if inequalities~\eqref{eq:insect} are met, the inclusion $z\in\triangle xy0$ holds if and only if
 \begin{equation}\label{eq:intriag}
 h(x,y,z):=\frac{(y-x,Tz)}{(y,Tx)}\le 1.
 \end{equation}
\end{lemma}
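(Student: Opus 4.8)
The plan is to exploit the fact that, under condition~\eqref{eq:Lcond}, the vectors $x$ and $y$ form a basis of $\mathbb{R}^{2}$, so that $z$ admits a unique representation $z=sx+ty$; the entire lemma then reduces to identifying the coordinates $s,t$ and the quantity $h$ with the explicit scalar-product ratios in the statement. The only structural fact I need about the matrix $T$ from~\eqref{eq:Lcond} is that it is skew-symmetric, which yields the two identities $(u,Tu)=0$ and $(u,Tv)=-(v,Tu)$ for all $u,v\in\mathbb{R}^{2}$ (the first expresses that a vector is orthogonal to its $90^{\circ}$ rotation, the second that $T^{\mathtt{t}}=-T$). I would record these at the outset, together with the observation that $(x,Ty)\neq0$ in~\eqref{eq:Lcond} is exactly the statement that $x$ and $y$ are linearly independent, so the expansion $z=sx+ty$ exists and is unique.

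First I would pin down $s$ and $t$. Writing $z=sx+ty$ and pairing with $Ty$ gives $(z,Ty)=s(x,Ty)+t(y,Ty)=s(x,Ty)$, since $(y,Ty)=0$; dividing by the nonzero quantity $(x,Ty)$ recovers $s=s(x,y,z)$ from~\eqref{eq:insect}. Pairing instead with $Tx$ gives $(z,Tx)=t(y,Tx)$ and hence $t=t(x,y,z)$. Because $z=sx+ty$ is the unique expansion of $z$ in the basis $\{x,y\}$, the membership $z\in S(x,y)$ is equivalent to $s\ge0$ and $t\ge0$, which is precisely~\eqref{eq:insect}; this settles the first assertion.

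For the second assertion I would use that, once $s,t\ge0$ are known, $z\in\triangle xy0$ holds if and only if $s+t\le1$, so it suffices to show $h(x,y,z)=s+t$. Substituting $z=sx+ty$ into the numerator of $h$ and again using $(x,Tx)=(y,Ty)=0$ gives $(y-x,Tz)=s(y,Tx)-t(x,Ty)$; the antisymmetry identity $(x,Ty)=-(y,Tx)$ converts this into $(s+t)(y,Tx)$, and dividing by $(y,Tx)$ yields $h(x,y,z)=s+t$ exactly. Hence $h\le1$ is equivalent to $s+t\le1$, which finishes the proof.

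There is no serious obstacle here: every step is a one-line linear-algebra computation, and the content of the lemma is simply that the barycentric-type coordinates $s,\,t,\,1-s-t$ of $z$ relative to the triangle $\triangle xy0$ coincide with the signed-area ratios appearing in~\eqref{eq:insect} and~\eqref{eq:intriag}. The only point that genuinely requires care is the bookkeeping of signs, in particular the identity $(x,Ty)=-(y,Tx)$, which is what makes the two denominators in $s$ and $t$ differ by a sign and makes the combination collapse neatly to $s+t$ in $h$; I would double-check these signs against the explicit form of $T$ in~\eqref{eq:Lcond} to be sure that the stated inequalities, and not their reverses, come out.
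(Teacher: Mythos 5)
Your proof is correct and follows essentially the same route as the paper: expand $z=sx+ty$ in the basis $\{x,y\}$ guaranteed by~\eqref{eq:Lcond}, pair with $Ty$ and $Tx$ using $(x,Tx)=(y,Ty)=0$ and $(x,Ty)=-(y,Tx)$ to identify $s$, $t$, and $s+t$ with the stated ratios, and then translate membership in $S(x,y)$ and $\triangle xy0$ into $s,t\ge0$ and $s+t\le1$. The sign bookkeeping you flag, in particular the collapse of $s(y,Tx)-t(x,Ty)$ to $(s+t)(y,Tx)$, checks out against the explicit $T$.
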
 
\begin{proof}
The vectors $x$ and $y$, subject to condition~\eqref{eq:Lcond}, are linearly independent and therefore form a basis in $\mathbb{R}^{2}$. Then the vector $z$ can be uniquely represented in the form
\begin{equation}\label{eq:sigma-tau}
 z=s x + t y,\qquad s,t\in\mathbb{R},\quad z\in S(x,y),
\end{equation}
if and only if
\begin{equation}\label{eq:st-ineq}
 s\ge0,\quad t\ge 0,
\end{equation}
and $z\in\triangle xy0$ if additionally
\begin{equation}\label{eq:st-ineq2}
 s+t\le1.
\end{equation}

Multiplying the equality in~\eqref{eq:sigma-tau} scalarly from the right first by $Ty$ and then by $Tx$, and taking into account that $(y,Ty)=(x,Tx)=0$ in the resulting expressions, and also $(y,Tx)=-(x,Ty)\neq0$ due to~\eqref{eq:Lcond}, we obtain that the quantities $s$, $t$, and $s+t$ are determined by the equalities
\[
 s=s(x,y,z),\quad t=t(x,y,z),\quad s+t=h(x,y,z).
\]
From here, by virtue of~\eqref{eq:st-ineq} and \eqref{eq:st-ineq2},  the relations~\eqref{eq:insect} and~\eqref{eq:intriag} follow, respectively. The lemma is proven.
\end{proof} 

We begin the proof of inclusions~\eqref{eq:inS} for some $\mu>0$ by noting that these inclusions can take place if and only if each of the points $a_{4},a_{6},b_ {3},b_{7}$ belongs to some triangle $\triangle xy0$ with neighboring vertices $x=v_{i}, y=v_{i+1}$, $i=1,2,\ldots,12$ (assuming that $v_{13}=v_{1}$). In order not to check all $12$ variants of pairs of neighboring vertices for each of the points $a_{4},a_{6},b_{3},b_{7}$, we turn to Figure~\ref{F:0}, which suggests that at least for $\varkappa=1.331$ the triangles to which the points $a_{4},a_{6},b_{3},b_{7}$ can belong are as follows: $\triangle v_{2} v_{3}0$ and $\triangle v_{11}v_{12}0$.

As shown in Section~\ref{app:rows-dispose} by direct computation, the vertex pairs $v_{2},v_{3}$ and $v_{11},v_{12}$ satisfy~\eqref{eq:Lcond}:
\[
 (v_{2},Tv_{3}) = (v_{11},Tv_{12}) =\frac{\varkappa^{7/3}\mu}{\varkappa^2+1}\neq 0, \qquad \forall~\varkappa>1,\mu>0
\]
which makes it possible to use Lemma~\ref{lem:in-triangle} for further research.

Simple calculations show that
\begin{align*} 
 s(v_{11},v_{12},a_{4}) & =\frac{\varkappa^4-1}{\varkappa^4\mu}>0, & t(v_{11},v_{12},a_{4}) & =\frac{1}{\varkappa^4}>0, \\ s(v_{2},v_{3},a_{6}) & =\frac{1}{\varkappa^4}>0, & t(v_{2},v_{3},a_{6}) & =\frac{\varkappa^4-1}{\varkappa^{10/3}\mu}>0, \\ s(v_{11},v_{12},b_{3}) & =\frac{1}{\varkappa^4}>0, & t(v_{11},v_{12},b_{3}) & =\frac{(\varkappa^6-1)\mu}{\varkappa^4(\varkappa^2+1)}>0, \\ s(v_{2},v_{3},b_{7}) & =\frac{(\varkappa^6-1)\mu }{\varkappa^{14/3} (\varkappa^2+1)}>0, & t(v_{2},v_{3},b_{3}) & =\frac{1}{\varkappa^4}>0, 
\end{align*} 
and these inequalities are true not only for $\varkappa=1.331$, but also for all $\varkappa>1$ and $\mu>0$. In this case, by virtue of the first statement of Lemma~\ref{lem:in-triangle}, inclusions
\[ 
  a_{4},b_{3}\in S(v_{11},v_{12}),\quad a_{6},b_{7}\in S(v_{2},v_{3}) 
\] 
hold. These inclusions are also true for all $\varkappa>1$ and $\mu>0$, which excludes the possibility that the points $ a_{4},a_{6},b_{3},b_{7}$ belong to any other sectors, and thereby relieves us of the need to check all $12$ variants of conditions~\eqref{eq:Lcond} for each of the points $a_{4},a_{6},b_{3},b_{7}$.

All we have to do now is write down conditions~\eqref{eq:intriag} from Lemma~\ref{lem:in-triangle} for all variants of the obtained vertices $v$ and points $a$ and $b$: 
\begin{align*} 
 h(v_{11},v_{12},a_{4}) & =\frac{\varkappa^4+\mu-1} {\varkappa^4\mu}\le1 & \Leftrightarrow & & \mu & \ge\mu_{0}(\varkappa):=1, \\ h(v_{2},v_{3},a_{6}) & =\frac{\varkappa^{2/3}(\varkappa^4-1)+\mu} {\varkappa^4 \mu}\le1 & \Leftrightarrow & & \mu & \ge\mu_{1}(\varkappa):=\varkappa^{2/3}, \\ h(v_{11},v_{12},b_{3}) & =\frac{(\varkappa^6-1)\mu+\varkappa^2+1} {\varkappa^4(\varkappa^2+1)}\le1 & \Leftrightarrow & & \mu & \le\mu_{2}(\varkappa):=\frac{(\varkappa^2+1)^2} {\varkappa^4+\varkappa^2+1}, \\ h(v_{2},v_{3},b_{7}) & =\frac{(\varkappa^6-1)\mu+\varkappa^{2/3}(\varkappa^2+1)} {\varkappa^{14/3}(\varkappa^2+1)}\le1 & \Leftrightarrow & & \mu & \le\mu_{3}(\varkappa):=\frac{\varkappa^{2/3}(\varkappa^2+1)^2} {\varkappa^4+\varkappa^2+1}. 
\end{align*} 
Here conditions~\eqref{eq:intriag} from Lemma~\ref{lem:in-triangle} are written out in the left column, and their reformulations in terms of the parameter $\mu>0$ are in the right column. As is easy to see, then
\[ 
  \mu_{0}(\varkappa)<\mu_{1}(\varkappa),\quad \mu_{2}(\varkappa)<\mu_{3}(\varkappa),\qquad\forall~ \varkappa>1 
\] 
and therefore $\mu$ must actually satisfy only two inequalities:
\begin{equation}\label{eq:allowed-mu}
 \mu_{1}(\varkappa)\le\mu\le \mu_{2}(\varkappa),\quad\text{where}\quad\varkappa>1.
\end{equation}

Let us now note that the inequality $\mu_{1}(\varkappa)\le\mu_{2}(\varkappa)$ holds 
only for
\begin{equation}\label{eq:allowed-varkappa} 
  \varkappa\in(1,\varkappa_{\text{max}}], 
\end{equation} 
where the value $\varkappa_{\text{max}}$ is uniquely determined from the equation $\mu_{1}(\varkappa)=\mu_{2}(\varkappa)$ and its approximate value is $\varkappa_{\text{max}}\approx 1.447892$. Therefore, inequalities~\eqref{eq:allowed-mu} can be satisfied by choosing $\mu$ only for those values of $\varkappa$ that satisfy  inclusion~\eqref{eq:allowed-varkappa}.

In particular, for $\varkappa=1.331$ we have the following numerical relations:
\begin{equation}\label{eq:mu-range} 
  \mu_{0} = 1 < \mu_{1} = 1.21 \le\mu\le\mu_{2} \approx 1.299757 < \mu_{3}\approx 1.572706. 
\end{equation} 
The corresponding set $S$, as well as its images $\tA S$ and $\tB S$, are depicted in  Figure~\ref{F:1}.
\begin{figure}[htbp!] 
  \centering 
  \includegraphics*[width=0.6\textwidth]{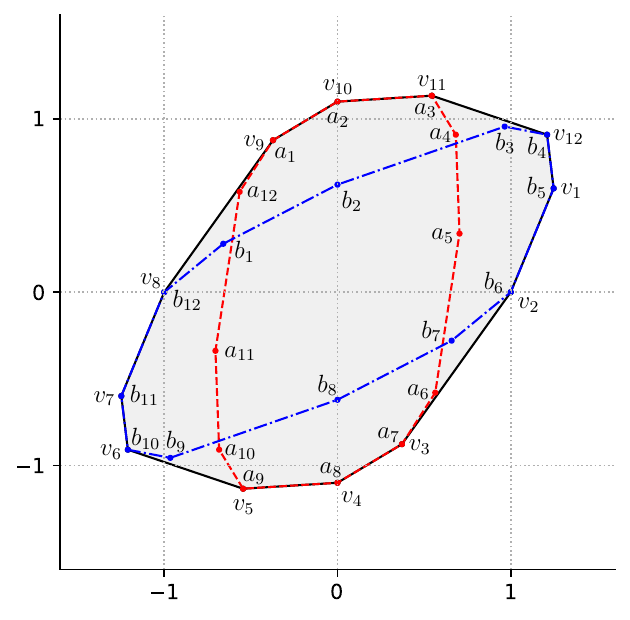} 
  \caption{Polygon $S$ (solid line, gray background) and its images $\tA S$ (dashed line) and $\tB S$ (dash-dotted line); $\varkappa=1.331$, $\mu=1.25$}\label{F:1} 
\end{figure} 

According to Remark~\ref{rem:redundancy} there is no need to prove the convexity of the polygon~$S$. However, in Section~\ref{app:convex}, for the sake of completeness, it is shown that for values of~$\varkappa$ and~$\mu$ satisfying~\eqref{eq:allowed-mu} and~\eqref{eq:allowed-varkappa}, the set $S$ is actually convex; see Figure~\ref{F:1}.

Let us summarize the results of our constructions in the following theorem.
\begin{theorem}\label{thm:main}
Let $\setA=\{A,B\}$ be the set of matrices~\eqref{eq:AB2}, and let $S$ be the polygon with vertices $v_{i}$, $i=1,2,\ldots,12$, defined by~\eqref{eq:s1-s6} and~\eqref{eq:s7-s12}. Then, for values of the parameters $\varkappa$ and $\mu$ satisfying~\eqref{eq:allowed-mu} and~\eqref{eq:allowed-varkappa}, the inclusions $a_{i}=\tA v_{i}\subseteq S$, $b_{i}=\tB v_{i}\subseteq S$ hold for $i=1,2,\ldots,12$ and, consequently, the set of matrices $\setA$ has the norm with the unit ball $S$ required in~\eqref{eq:tAnorm}.
\end{theorem}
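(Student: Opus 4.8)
The plan is to reduce the assertion about the norm to finitely many vertex inclusions and then collect the computations already carried out above. Recall that a balanced convex dodecagon $S$ (here identified with the closed region it bounds) is the unit ball of a norm satisfying~\eqref{eq:tAnorm} precisely when $\tA S\subseteq S$ and $\tB S\subseteq S$. Since $\tA$ and $\tB$ are linear and $S$ is the convex hull of its vertices $v_{1},\dots,v_{12}$, the inclusion $\tA S\subseteq S$ is equivalent to $a_{i}=\tA v_{i}\in S$ for all $i$, and similarly $\tB S\subseteq S$ to $b_{i}=\tB v_{i}\in S$ for all $i$. Hence the existence of the required norm follows once every vertex inclusion $a_{i},b_{i}\in S$, $i=1,\dots,12$, has been verified, the convexity of $S$ being used both here and in the passage to the convex hull.

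First I would halve the work using balancedness. Because $v_{i+6}=-v_{i}$ by~\eqref{eq:s7-s12} and both $\tA$ and $\tB$ commute with the reflection $x\mapsto-x$, the inclusions for $i=7,\dots,12$ reduce to those for $i=1,\dots,6$. Among the latter, eight are immediate from the explicit identities~\eqref{eq:sab}: the points $a_{1},a_{2},a_{3}$ coincide with the vertices $v_{9},v_{10},v_{11}$ and $b_{4},b_{5},b_{6}$ with $v_{12},v_{1},v_{2}$, while $a_{5}=\tfrac{1}{\lambda}v_{1}$ and $b_{2}=\tfrac{1}{\lambda}v_{10}$ lie on the segments joining the origin to $v_{1}$ and $v_{10}$ (here $\lambda=\varkappa^{2}>1$), so all eight points belong to $S$.

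The genuine content is the remaining quartet $a_{4},a_{6},b_{1},b_{3}\in S$, which I would handle exactly as prepared above. Using $b_{1}=-b_{7}$ these become $a_{4},a_{6},b_{3},b_{7}\in S$, and I would localize each point in a single triangle $\triangle xy0$ spanned by adjacent vertices. The sector test~\eqref{eq:insect} of Lemma~\ref{lem:in-triangle} shows, for all $\varkappa>1$ and $\mu>0$, that $a_{4},b_{3}\in S(v_{11},v_{12})$ and $a_{6},b_{7}\in S(v_{2},v_{3})$, which identifies the relevant triangles as $\triangle v_{11}v_{12}0$ and $\triangle v_{2}v_{3}0$ and dispenses with inspecting the other sectors. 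The triangle test~\eqref{eq:intriag} then converts the four inclusions into the four scalar inequalities controlled by $\mu_{0},\mu_{1},\mu_{2},\mu_{3}$; since $\mu_{0}(\varkappa)<\mu_{1}(\varkappa)$ and $\mu_{2}(\varkappa)<\mu_{3}(\varkappa)$, they collapse to $\mu_{1}(\varkappa)\le\mu\le\mu_{2}(\varkappa)$, i.e.\ the admissible range~\eqref{eq:allowed-mu}, which is nonempty precisely when~\eqref{eq:allowed-varkappa} holds.

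It remains to secure the convexity of $S$, the single ingredient not forced by the inclusion analysis. This is available: Remark~\ref{rem:redundancy} shows convexity may be assumed at no cost (replacing $S$ by $\co S$ preserves all inclusions), and the direct computation in Section~\ref{app:convex} confirms that $S$ itself is convex in the admissible parameter range. With $S$ a balanced convex dodecagon containing all its vertex images, linearity yields $\tA S\subseteq S$ and $\tB S\subseteq S$, so the norm with unit ball $S$ satisfies~\eqref{eq:tAnorm}. The only real difficulty in the argument is the localization step — confining each of the four nonobvious points to one predetermined triangle so that a single parameter-uniform check suffices — together with the feasibility bookkeeping that ties the existence of an admissible $\mu$ to the constraint~\eqref{eq:allowed-varkappa}.
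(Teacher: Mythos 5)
Your proposal is correct and follows essentially the same route as the paper: the same reduction to the twelve vertex inclusions, the same splitting into the eight inclusions immediate from~\eqref{eq:sab} and the four nonobvious ones $a_{4},a_{6},b_{3},b_{7}$, the same localization via the sector and triangle tests of Lemma~\ref{lem:in-triangle}, the same collapse of the resulting inequalities to~\eqref{eq:allowed-mu} and~\eqref{eq:allowed-varkappa}, and the same handling of convexity via Remark~\ref{rem:redundancy} and Section~\ref{app:convex}. No gaps.
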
 

\section{Proofs and auxiliary relations}\label{sec:proofs} 

\subsection{Proof of Theorem~\ref{thm:smp}}\label{sec:proof-smp} 
Let $\sigma(X)$ denote the spectrum of a square matrix~$X$. If a matrix $X$ is a product of matrices 
\begin{equation}\label{eq:prodX} 
  X=X_{k}X_{k-1}\cdots X_{1},\qquad X_{i}\in\setA\quad\text{for}\quad i=1,2,\ldots,k, 
\end{equation} 
then through $\#_{A}(X)$ we will denote the number of factors $X_{i}$ of the form $A$ in the product~\eqref{eq:prodX}, and through $\#_{B}(X)$ the number of factors $X_{i}$ of the form $B$ in~\eqref{eq:prodX}.

As noted in Section~\ref{sec:example}, the mapping $\tau$ is multiplicative, and therefore, for the matrix $M$ from the conditions of the theorem, the equality
\begin{equation}\label{eq:Mprod} 
  \tau(M)=\tau(M_{k})\tau(M_{k-1})\cdots\tau(M_{1}) 
\end{equation} 
holds. Then $\sigma(M)=\sigma(\tau(M))$, which means that the spectral radii of the matrices~$M$ and $\tau(M)$ coincide. Since both matrices~$M$ and $\tau(M)$ are represented by products with the same number of matrix factors from the set~$\setA$, and the matrix~$M$, by the conditions of the theorem, is spectrum maximizing, then the matrix~$\tau( M)$ is also spectrum maximizing.

Finally, note that by definition the mapping $\tau$, when applied, swaps the matrices~$A$ and $B$. Then
\[
 \#_{A}(\tau(M))=\#_{B}(M),\quad \#_{B}(\tau(M))=\#_{A}(M).
\]
For an odd number of factors $M_{i}$ in~\eqref{eq:prodM}, in this case $\#_{B}(M)\neq\#_{A}(M)$, and therefore
\[
 \#_{A}(\tau(M))=\#_{B}(M)\neq\#_{A}(M).
\]
Consequently, the matrices $M$ and $\tau(M)$ turn out to be different in the sense that neither of them is a cyclic permutation of the factors of the other. The theorem is proven.

\subsection{Location of the vertices of the polygon $S$}\label{app:rows-dispose} 

Let
\[
  R(x)=\{y\in\mathbb{R}^{2}: y=t x,~t\ge0\},\quad 
  L(x)=\{y\in\mathbb{R}^{2}: y=t x,~t\in\mathbb{R}\}
\]
denote the ray and the straight line, respectively, passing through the point $x\ne 0$ and the origin. Note that for nonzero points $x$ and $y$ the following condition of ``noncoincidence of the lines $L(x)$ and $L(y)$'' is true: 
\[ 
  L(x)\cap L(y)=\{0\}\quad\Longleftrightarrow\quad (x,Ty)\neq 0, 
\] 
where $(\cdot,\cdot)$ is the Euclidean scalar product in $\mathbb{R}^{2}$, and $T$ is the rotation matrix by the angle $\frac{\pi}{2}$ counterclockwise, defined by equality~\eqref{eq:Lcond}.

For each point $x\ne 0$ the set $\mathbb{R}^{2}\backslash L(x)$ consists of two open half-planes. Denote by
\[
\mathbb{R}^{2}_{+}(x)=\{y\in\mathbb{R}^{2}: (x,Ty)> 0\}
\]
the half-plane formed by the vectors $y\in\mathbb{R}^{2}$ obtained from $x$ by clockwise rotation through some angle $\varphi\in(0,\pi)$ and subsequent suitable compression or expansion.

Direct calculations show that for $\varkappa>1$, $\mu>0$, and $\lambda=\varkappa^{2}$ the following relations hold:
{\allowdisplaybreaks%
\begin{align*} 
 (v_{1},Tv_{2}) & =\frac{\varkappa\mu}{\varkappa^2+1}> 0, & (v_{1},Tv_{3}) & =\frac{\varkappa^{1/3}(\varkappa^4+\varkappa^2+1)\mu^2} {(\varkappa^2+1)^2}> 0, \\ (v_{1},Tv_{4}) & =\varkappa^{1/3}\mu> 0, & (v_{1},Tv_{5}) & =\frac{(\varkappa^4+\varkappa^2+1)\mu^2} {\varkappa^{1/3}(\varkappa^2+1)^2}> 0, \\ (v_{1},Tv_{6}) & =\frac{\mu}{\varkappa^{1/3}(\varkappa^2+1)}> 0, & (v_{2},Tv_{3}) & =\frac{\varkappa^{7/3}\mu}{\varkappa^2+1}> 0, \\ (v_{2},Tv_{4}) & =\varkappa^{1/3}> 0, & (v_{2},Tv_{5}) & =\frac{\mu}{\varkappa^{1/3}}> 0, \\ (v_{2},Tv_{6}) & =\frac{1}{\varkappa^{1/3}}> 0, & (v_{3},Tv_{4}) & =\frac{\mu}{\varkappa^{1/3}(\varkappa^2+1)}> 0, \\ (v_{3},Tv_{5}) & =\frac{(\varkappa^4+\varkappa^2+1)\mu^2} {\varkappa(\varkappa^2+1)^2}> 0, & (v_{3},Tv_{6}) & =\frac{(\varkappa^4+1)\mu} {\varkappa(\varkappa^2+1)}> 0, \\ (v_{4},Tv_{5}) & =\frac{\varkappa\mu}{\varkappa^2+1}> 0, & (v_{4},Tv_{6}) & =\varkappa> 0, \\ (v_{5},Tv_{6}) & =\frac{\varkappa^{7/3}\mu}{\varkappa^2+1}> 0. & 
\end{align*}%
}%
Due to these relations
\[
 L(v_{i})\cap L(v_{j})=\{0\}\qquad\text{for}\quad i,j=1,2,\ldots,6,~i\neq j.
\]
And since $v_{i+6}=-v_{i}$ for $i=1,2,\ldots,6$, then the previous relations show that the points $v_{1},v_{2},\ldots ,v_{12}$ actually generate only $6$ lines, $L(v_{1}),L(v_{2}),\ldots,L(v_{6})$, which are different for all $\varkappa>1$ and $\mu>0$.

The obtained expressions for $(v_{i},Tv_{j})$ show that
\begin{equation}\label{eq:vivj}
(v_{i},Tv_{j})>0\quad\text{for}\quad i=1,2,\ldots,5,~j=2,3,\ldots,5,~ i<j.
\end{equation}
Consequently, all vertices $v_{2},v_{3},\ldots,v_{6}$ belong to the open half-plane
$\mathbb{R}^{2}_{+}(v_{1})$:
\[
v_{2},v_{3},\ldots,v_{6}\in \mathbb{R}^{2}_{+}(v_{1}).
\]
It also follows from~\eqref{eq:vivj} that for $i=2,3,\ldots,5$ each vector $v_{i+1}$ is obtained from $v_{i}$ by a clockwise rotation by some angle $\varphi\in(0,\pi)$ and subsequent appropriate compression or expansion. This means that, when going clockwise around the origin, the vertices $v_{1},v_{2},\ldots,v_{6}$ of the polygon $S$ appear in the order of their indexing. Considering that the vertices $v_{7},v_{8},\ldots,v_{12}$ are obtained from $v_{1},v_{2},\ldots,v_{6}$ by mirroring relative to the origin, we can repeat the same conclusion regarding their order that was made for the vertices $v_{1},v_{2},\ldots,v_{6}$. And then a similar conclusion can be made for the entire set of vertices $v_{1},v_{2},\ldots,v_{12}$, and this conclusion does not depend on the choice of parameters $\varkappa>1$ and $\mu>0$.

\subsection{Convexity of the polygon $S$}\label{app:convex}
As shown in the previous section, the vertices $v_{1},v_{2},\ldots,v_{6}$ of the polygon $S$ appear in the order of their indexing when going clockwise around the origin. Therefore, to prove the convexity of the polygon $S$, it suffices to show that each of the vertices $v_{i}$, $i=1,2,\ldots,12$, does not belong to the interior of the triangle $\triangle
v_{i-1}v_{i+1}0$, where it is assumed that $v_{0}=v_{12}$ and $v_{13}=v_{1}$. By Lemma~\ref{lem:in-triangle} the corresponding ``nonbelonging'' conditions are equivalent to the inequalities
\[
 h(v_{i-1},v_{i+1},v_{i})\ge 1,\qquad i=1,2,\ldots,6.
\]
Let us write down the corresponding conditions, as well as their reformulations in terms of the parameter~$\mu$: 
{\allowdisplaybreaks%
\begin{align*} 
 h(v_{12},v_{2},v_{1}) & =\frac{(\varkappa^{4/3}+1)\mu} {\varkappa^2+1}\ge1 & \Leftrightarrow & & \mu & \ge\omega_{1}(\varkappa):=\frac{\varkappa^2+1}{\varkappa^{4/3}+1}, \\ h(v_{1},v_{3},v_{2}) & =\frac{(\varkappa^2+1)(\varkappa^2+\varkappa^{2/3})} {(\varkappa^4+\varkappa^2+1)\mu}\ge1 & \Leftrightarrow & & \mu & \le\omega_{2}(\varkappa):=\frac{(\varkappa^2+1)(\varkappa^2+\varkappa^{2/3})} {\varkappa^4+\varkappa^2+1}, \\ h(v_{2},v_{4},v_{3}) & =\frac{(\varkappa^{8/3}+1)\mu} {\varkappa^{2/3} (\varkappa^2+1)}\ge1 & \Leftrightarrow & & \mu & \ge\omega_{3}(\varkappa):=\frac{\varkappa^{2/3} (\varkappa^2+1)} {\varkappa^{8/3}+1}, \\ h(v_{3},v_{5},v_{4}) & =\frac{(\varkappa^2+1)(\varkappa^2+\varkappa^{2/3})} {(\varkappa^4+\varkappa^2+1)\mu}\ge1 & \Leftrightarrow & & \mu & \le\omega_{4}(\varkappa):=\frac{(\varkappa^2+1)(\varkappa^2+\varkappa^{2/3})} {\varkappa^4+\varkappa^2+1}, \\ h(v_{4},v_{6},v_{5}) & =\frac{(\varkappa^{4/3}+1)\mu} {\varkappa^2+1}\ge1 & \Leftrightarrow & & \mu & \ge\omega_{5}(\varkappa):=\frac{\varkappa^2+1}{\varkappa^{4/3}+1}, \\ h(v_{5},v_{7},v_{6}) & =\frac{(\varkappa^2+1)(\varkappa^{8/3}+1)} {(\varkappa^4+\varkappa^2+1)\mu}\ge1 & \Leftrightarrow & & \mu & \le\omega_{6}(\varkappa):=\frac{(\varkappa^2+1)(\varkappa^{8/3}+1)} {\varkappa^4+\varkappa^2+1}. 
\end{align*}
} 

It is easy to check that for all $\varkappa>1$ and $\mu>0$ the following inequalities hold:
\[
\omega_{3}(\varkappa)\le\omega_{1}(\varkappa)=\omega_{5}(\varkappa)\le\mu_{1}(\varkappa),\quad \mu_{2}( \varkappa)\le\omega_{2}(\varkappa)=\omega_{4}(\varkappa)\le\omega_{6}(\varkappa).
\]
Then, for values of the parameters $\varkappa$ and $\mu$ satisfying~\eqref{eq:allowed-mu} and~\eqref{eq:allowed-varkappa}, the inequalities
\[ 
  \omega_{3}(\varkappa)\le\omega_{1}(\varkappa)=\omega_{5}(\varkappa)\le\mu_{1}(\varkappa)\le \mu\le\mu_{2}(\varkappa)\le\omega_{2}(\varkappa)=\omega_{4}(\varkappa)\le\omega_{6}(\varkappa) 
\] 
are also satisfied and, therefore, for these values of the parameters $\varkappa$ and $\mu$, the polygon $S$ is convex.

\section{Notes and comments}\label{sec:comments} 

\begin{remark}\label{rem:notJSRtoolbox}
It would seem that in order to construct the required norm it would be natural to use the numerical implementation of the ``polytope algorithm''~\cite{GugProt:FCM13, JSRToolbox}, which allows, according to the authors, ``an accurate calculation of the joint spectral characteristics of matrices'' and to determine whether a specific matrix product is spectrum maximizing.

Unfortunately, our attempt to use the \texttt{JSR~Toolbox}~\cite{JSRToolbox} for this purpose was unsuccessful: it turned out that the version of this package available in MATLAB Central is outdated and incompatible with MATLAB releases R2023a and later due to changes in the format and parameters of calling the linear programming solver \texttt{linprog} from the \texttt{Optimization Toolbox} in these releases.

After our amateur ``correction'' of calling the \texttt{linprog} procedure in the script \texttt{\detokenize{jsr_norm_balancedRealPolytope.m}}\footnote{The related programs are available on the website \url{https://github.com/kozyakin/spectrum_maximizing_products}.} the functionality of \texttt{JSR~Toolbox} from a formal point of view has been ``restored.'' However, not being sure that our intervention did not affect the correct operation of the corresponding algorithm, when constructing the example in Section~\ref{sec:example} we decided to abandon the use of numerical methods and preferred to use an analytical approach instead.

However, it is worth noting that for the set of matrices in Example~\ref{ex:main}, the polygon~$S$ calculated using \texttt{JSR~Toolbox} coincided with the polygon~$S$ constructed in Section~\ref{sec:example} for $\mu=\mu_{2}$, i.e., at the maximum permissible value of the parameter $\mu$ according to~\eqref{eq:mu-range}.
\end{remark} 

\begin{remark}\label{rem:first-ex}
The scheme for constructing the required norm, applied in section~\ref{sec:example} for matrices from Example~\ref{ex:main}, turned out to be efficient when analyzing matrix products with matrices from Example~\ref{ex:alt}. The only difference was that the more familiar form of specifying rotation matrices turned out to be less suitable for symbolic manipulations in ``manual mode'' due to the appearance of many quadratic radicals in intermediate calculations. However, in this case too, with the help of the Wolfram~\emph{Mathematica} program, all the necessary transformations were completed. 

It turned out that in this case the norm required in~\eqref{eq:tAnorm} exists for
\[
\varkappa\in(1,\varkappa_{\text{max}}],\quad\text{where}\quad \varkappa_{\text{max}}\approx 1.528580.
\]
In particular, for $\varkappa=1.331$ the following relations were obtained:
\[ 
  \mu_{0} \approx 0.874539 < \mu_{1} \approx 1.032076 \le\mu\le\mu_{2} \approx 1.143460 < \mu_{3}\approx 1.349441. 
\] 
The corresponding set $S$, as well as its images $\tA S$ and $\tB S$, are shown in Figure~\ref{F:2}. 
\begin{figure}[htbp!] 
  \centering 
  \includegraphics*[width=0.6\textwidth]{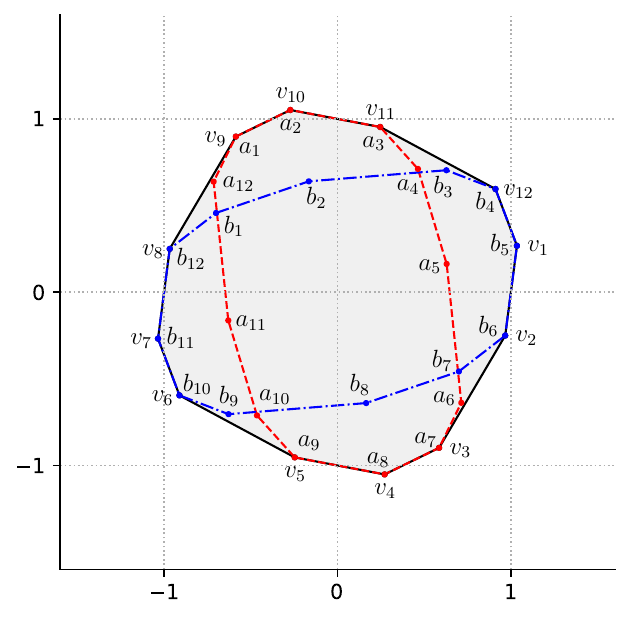} 
  \caption{Polygon $S$ from Example~\ref{ex:alt} (solid line, gray background) and its images $\tA S$ (dashed line) and $\tB S$ (dash-dotted line); $\varkappa=1.331$, $\mu=1.07$}\label{F:2} 
\end{figure} 
\end{remark} 

\begin{remark}\label{rem:github}
When performing symbolic transformations in Section~\ref{sec:example} we used the Wolfram~\emph{Mathematica} program. Visualization of polygons $S$ and their images $\tA S$ and $\tB S$ for Figures~\ref{F:0}--\ref{F:2} was carried out using Python programs. When constructing these figures, calculations were carried out at $\varkappa=1.331$. The reason for choosing such a ``strange'', at first glance, value for the parameter $\varkappa$ is explained in Remark~\ref{rem:kappa}. In preliminary test calculations, other values of the parameter were used, as well as programs based on a slight modification of the max-relaxation algorithm~\cite{Koz:DCDSB10} for the iterative construction of Barabanov norms.

All programs used in this work are available for download and free use from the website \url{https://github.com/kozyakin/spectrum_maximizing_products}.
\end{remark} 

\section{Conclusion}\label{S:conclusion}
The motivation for this work was the example constructed in~\cite{BochiLas:SAIMJMAA24} of a set of matrices $\{A,B\}$ that has two different (up to cyclic permutations of factors) spectrum maximizing products, $AABABB$ and $BBABAA$. The specific feature of this example (and other similar examples in~\cite{BochiLas:SAIMJMAA24}) was that the number of factors $A$ (or $B$) in both products turned out to be the same. In this regard, the question arose: are there pairs of spectrum maximizing products with different numbers of factors $A$ (or $B$) in these products? This paper gives a positive answer to this question. For this purpose, a method for constructing matrix sets $\{A,B\}$ is proposed, different from the method used in~\cite{BochiLas:SAIMJMAA24}, in which the number of spectrum maximizing products with an odd number of factors automatically turns out to be at least two, and the number of factors of the form $A$ (or $B$) in such products is different. Using this approach, we constructed an example of a set of $2{\times}2$ matrices $\{A,B\}$, for which the spectrum maximizing products have the form $BAA$ and $BBA$. Unlike~\cite{BochiLas:SAIMJMAA24}, where the construction of the corresponding examples essentially used the numerical implementation of the ``polytope algorithm''~\cite{GugProt:FCM13, JSRToolbox}, in this work, analytical methods were used to construct the examples.

\section*{Acknowledgments}

The author thanks the reviewers for several valuable comments and suggestions. Special thanks go to one of the reviewers who drew the author's attention to Friedland's theorem~\cite[Thm.~2.2]{Friedland:AM83}. Using this theorem together with the reviewer's detailed comments made it possible to significantly simplify the initial definition of $\tau$-permutability of a set of matrices and formulate a simple criterion for such permutability.

%\newcommand\BibAnnote[1]{\relax}% +m means long or robust command 
%\newcommand*\BibEmph[1]{\emph{#1}}% 
%\newcommand*\BibDash{\relax} 
%\bibliographystyle{amsplain-arxiv}
%\bibliography{kozbib,kozpub}

\providecommand{\KeyWords}[1]{#1}

\end{document}